\newtheorem{theorem}{Theorem}[section]
\newtheorem{corollary}[theorem] {Corollary}
\newtheorem{definition}[theorem]{Definition}
\newtheorem{remark}[theorem]{Remark}
\title{This is the title}
\begin{document}
\begin{center}
{\bf{Perturbation of p-approximate Schauder frames for separable Banach spaces}}\\
K. Mahesh Krishna and P. Sam Johnson\\
Department of Mathematical and Computational Sciences\\ 
National Institute of Technology Karnataka (NITK), Surathkal\\
Mangaluru 575 025, India  \\
Emails: kmaheshak@gmail.com,  sam@nitk.edu.in\\

Date: \today
\end{center}

\hrule
\vspace{0.5cm}
\textbf{Abstract}:  Paley-Wiener theorem for frames for Hilbert spaces, Banach frames, Schauder frames and atomic decompositions for Banach spaces are known. In this paper, we derive Paley-Wiener theorem for p-approximate Schauder frames for separable Banach spaces. We show that our results give  Paley-Wiener theorem for frames for Hilbert spaces.

\textbf{Keywords}:  Frame, Approximate Schauder Frame, Paley-Wiener theorem, Perturbation.

\textbf{Mathematics Subject Classification (2020)}: 42C15,  47A55.


\section{Introduction}
About a century old theorem of Paley and Wiener states that sequences which are close to orthonormal  bases for Hilbert spaces are Riesz  bases (see Chapter 1, Theorem 13 in \cite{YOUNG} and \cite{ARSOVE}). Since frames are generalizations of Riesz bases, we naturally ask whether a sequence which is close to a frame is a frame? Recall that 
a sequence $\{\tau_n\}_n$ in  a separable Hilbert space $\mathcal{H}$ over $\mathbb{K}$ ($\mathbb{R}$ or $\mathbb{C}$) is said to be a
frame for $\mathcal{H}$ if there exist $a,b>0$ such that 
\begin{align*}
a\|h\|^2 \leq \sum_{n=1}^\infty |\langle h, \tau_n\rangle|^2\leq b\|h\|^2, \quad \forall h \in \mathcal{H}.
\end{align*}
Constants $a$ and $b$ are called as lower and upper frame bounds, respectively \cite{DUFFIN}. 
First Paley-Wiener theorem (also known as perturbation theorem) of a frame for a Hilbert space is due to  Christensen, in 1995, which states as follows.
\begin{theorem}\cite{PALEY1}\label{FIRSTPER}
	Let $ \{\tau_n\}_{n=1}^\infty$ be a frame for  $\mathcal{H} $ with bounds $ a$ and $b$. If  $ \{\omega_n\}_{n=1}^\infty$  in $\mathcal{H} $ satisfies
	$$ c \coloneqq\sum_{n=1}^{\infty}\|\tau_n-\omega_n\|^2<a,$$
	then it is  a frame for $\mathcal{H} $  with bounds $a\left(1-\sqrt{\frac{c}{a}}\right)^2 $ and $b\left(1+\sqrt{\frac{c}{b}}\right)^2.$
\end{theorem}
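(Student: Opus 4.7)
The plan is to exploit the Minkowski (triangle) inequality for the $\ell^2$-norm, turning the closeness hypothesis $\sum_n\|\tau_n-\omega_n\|^2 < a$ into uniform control on the analysis-type sequence $\{\langle h, \tau_n-\omega_n\rangle\}_n$ via Cauchy--Schwarz. The key identity is simply
$$\langle h, \omega_n\rangle = \langle h, \tau_n\rangle - \langle h, \tau_n - \omega_n\rangle,$$
so the $\ell^2$-norm of the left-hand side can be compared against those of the two pieces on the right.

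For the upper bound, Minkowski's inequality gives
$$\left(\sum_{n=1}^\infty |\langle h, \omega_n\rangle|^2\right)^{1/2} \leq \left(\sum_{n=1}^\infty |\langle h, \tau_n\rangle|^2\right)^{1/2} + \left(\sum_{n=1}^\infty |\langle h, \tau_n-\omega_n\rangle|^2\right)^{1/2}.$$
The first summand is at most $\sqrt{b}\,\|h\|$ by the upper frame bound for $\{\tau_n\}_n$. For the second, I would apply Cauchy--Schwarz term-by-term, $|\langle h, \tau_n-\omega_n\rangle|^2 \leq \|h\|^2\|\tau_n-\omega_n\|^2$, and use the hypothesis to dominate the sum by $c\|h\|^2$. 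Squaring the combined estimate $(\sqrt{b}+\sqrt{c})\|h\|$ gives exactly $b(1+\sqrt{c/b})^2\|h\|^2$.

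For the lower bound I would apply the reverse triangle inequality in the same framework to obtain
$$\left(\sum_{n=1}^\infty |\langle h, \omega_n\rangle|^2\right)^{1/2} \geq \left(\sum_{n=1}^\infty |\langle h, \tau_n\rangle|^2\right)^{1/2} - \left(\sum_{n=1}^\infty |\langle h, \tau_n-\omega_n\rangle|^2\right)^{1/2} \geq (\sqrt{a}-\sqrt{c})\,\|h\|,$$
and squaring then yields $a(1-\sqrt{c/a})^2\|h\|^2$.

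There is no deep obstacle here; the argument is essentially two applications of the triangle inequality, with Cauchy--Schwarz handling the perturbation. The only delicate point — and the reason for the precise hypothesis $c < a$ rather than $c < b$ — is guaranteeing that $\sqrt{a}-\sqrt{c}$ is strictly positive, since otherwise the lower frame bound collapses to zero and $\{\omega_n\}_n$ need not remain a frame. In other words, the upper bound is robust under any square-summable perturbation, whereas the lower bound is the sensitive one and dictates the form of the hypothesis.
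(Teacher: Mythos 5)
Your argument is correct and it delivers exactly the stated constants: the termwise Cauchy--Schwarz estimate $\sum_{n=1}^\infty|\langle h,\tau_n-\omega_n\rangle|^2\le c\|h\|^2$, combined with the triangle and reverse triangle inequalities in $\ell^2(\mathbb{N})$, gives $(\sqrt{a}-\sqrt{c})\|h\|\le\left(\sum_{n=1}^\infty|\langle h,\omega_n\rangle|^2\right)^{1/2}\le(\sqrt{b}+\sqrt{c})\|h\|$, and squaring yields the bounds $a\left(1-\sqrt{c/a}\right)^2$ and $b\left(1+\sqrt{c/b}\right)^2$; your observation that only the lower bound forces $c<a$ is also accurate. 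This is, however, a genuinely different route from the paper's. The paper does not prove Theorem \ref{FIRSTPER} directly: it is a cited result from \cite{PALEY1}, recovered in Remark \ref{OURCOROLLARY} as a corollary of the general perturbation theorem for p-ASFs (Theorem \ref{OURPERTURBATION}). On that route one sets $f_n=g_n=\langle\cdot,\tau_n\rangle$, so that $(\{f_n\}_n,\{\tau_n\}_n)$ is a 2-ASF with $\theta_f=\theta_\tau$, takes $r=s=t=\alpha=\beta=0$ and $\gamma=\sqrt{c}$ (condition (\ref{PEREQUATIONA}) then holds by the same Cauchy--Schwarz step you use), and notes that $\|\theta_fS_{f,\tau}^{-1}\|\le 1/\sqrt{a}$ converts the smallness condition $\gamma\|\theta_fS_{f,\tau}^{-1}\|<1$ into precisely $c<a$; the conclusion then comes from invertibility of $S_{g,\omega}S_{f,\tau}^{-1}$ via the Hilding-type Theorem \ref{cc1}, followed by surjectivity of $\theta_\omega$ and Theorem 5.4.1 of \cite{OLEBOOK} to pass back to the Hilbert-space frame property. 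What each approach buys: your direct sequence-space argument is elementary, self-contained, and produces the sharp constants in the statement (which the corollary route does not directly recover, since Theorem \ref{OURPERTURBATION} phrases its bounds through $\|S_{f,\tau}^{-1}\|$, $\|\theta_\tau\|$ and $\|\theta_f\|$), but it leans on the inner-product structure and does not obviously extend to the $\alpha,\beta$-type perturbations of Theorems \ref{SECONDPER} and \ref{OLECAZASSA}, where simple triangle-inequality bookkeeping no longer suffices; the paper's operator-theoretic route is heavier machinery for this special case, but it works uniformly in separable Banach spaces and subsumes all three classical perturbation theorems at once.
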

In a short time after the derivation of Theorem \ref{FIRSTPER}, Christensen himself generalized Theorem  \ref{FIRSTPER} and obtained the following result.
\begin{theorem}\cite{PALEY2}\label{SECONDPER}
	Let $ \{\tau_n\}_{n=1}^\infty$ be a frame for  $\mathcal{H} $ with bounds $ a$ and $b$.  If  $ \{\omega_n\}_{n=1}^\infty$  in $\mathcal{H} $ is  such that there exist $ \alpha, \gamma \geq0$ with $\alpha+\frac{\gamma}{\sqrt{a}}< 1 $ and
	$$\left\|\sum_{n=1}^{m}c_n(\tau_n-\omega_n) \right\|\leq \alpha\left\|\sum_{n=1}^{m}c_n\tau_n\right \|+\gamma \left(\sum_{n=1}^{m}|c_n|^2\right)^\frac{1}{2},  \quad\forall c_1,  \dots, c_m \in \mathbb{K}, m=1, \dots, $$
	then it is  a frame for $\mathcal{H} $  with bounds $a\left(1-(\alpha+\frac{\gamma}{\sqrt{a}})\right)^2 $ and $b\left(1+(\alpha+\frac{\gamma}{\sqrt{b}})\right)^2.$
\end{theorem}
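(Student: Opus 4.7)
The plan is to reformulate both frame bounds in terms of synthesis operators and then deduce the lower bound from a Neumann series argument. I let $T: \ell^2 \to \mathcal{H}$ denote the synthesis operator of $\{\tau_n\}$, initially defined on finitely supported sequences by $T\{c_n\} = \sum_n c_n \tau_n$; the frame hypothesis on $\{\tau_n\}$ extends $T$ to a bounded surjection with $\|T\| \le \sqrt{b}$. Define $U$ analogously from $\{\omega_n\}$ on finitely supported sequences. The hypothesis becomes $\|(T-U)c\| \le \alpha\|Tc\| + \gamma\|c\|_2$ for finitely supported $c$, and I will extend it to all of $\ell^2$ by density once $U$ is shown bounded.

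For the upper bound, I observe that for finitely supported $c$,
\begin{align*}
\|Uc\| \le \|Tc\| + \|(T-U)c\| \le (1+\alpha)\|Tc\| + \gamma\|c\|_2 \le \bigl((1+\alpha)\sqrt{b}+\gamma\bigr)\|c\|_2.
\end{align*}
Hence $U$ extends to a bounded operator on $\ell^2$, and squaring its norm gives $\sum_n|\langle h,\omega_n\rangle|^2 = \|U^\ast h\|_2^2 \le b\bigl(1+\alpha+\gamma/\sqrt{b}\bigr)^2\|h\|^2$, the required upper frame bound.

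The core step is the lower bound, for which I would use the canonical dual frame $\{\tilde\tau_n\}$ of $\{\tau_n\}$, whose optimal upper bound is $1/a$. Writing $Ch = \{\langle h,\tilde\tau_n\rangle\}_n$, one has $\|Ch\|_2 \le \|h\|/\sqrt{a}$ and the reproducing identity $TCh = h$. Set $S = UC : \mathcal{H}\to\mathcal{H}$. Applying the (now density-extended) hypothesis to $c = Ch$ yields
\begin{align*}
\|h - Sh\| = \|(T-U)Ch\| \le \alpha\|TCh\| + \gamma\|Ch\|_2 \le \bigl(\alpha + \gamma/\sqrt{a}\bigr)\|h\|.
\end{align*}
Since $\alpha+\gamma/\sqrt{a}<1$, a Neumann series argument proves $S$ is invertible with $\|S^{-1}\| \le 1/\bigl(1-\alpha-\gamma/\sqrt{a}\bigr)$.

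To conclude, I pair $h$ with itself through the factorization $h = U(CS^{-1}h)$ and use the adjointness of $U$ and $U^\ast$:
\begin{align*}
\|h\|^2 = \langle U(CS^{-1}h), h\rangle = \langle CS^{-1}h, U^\ast h\rangle_{\ell^2} \le \|CS^{-1}h\|_2 \cdot \|U^\ast h\|_2 \le \frac{\|h\|}{\sqrt{a}\bigl(1-\alpha-\gamma/\sqrt{a}\bigr)}\,\|U^\ast h\|_2,
\end{align*}
which rearranges to $\|U^\ast h\|_2^2 \ge a\bigl(1-\alpha-\gamma/\sqrt{a}\bigr)^2\|h\|^2$. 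I expect the only subtle point to be the density extension needed to apply the hypothesis to arbitrary $c\in\ell^2$ (and in particular to $Ch$); once the extension is justified via the boundedness of $U$, the remaining computations reproduce the stated frame bounds exactly.
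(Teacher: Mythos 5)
Your proof is correct, and each step checks out: the bound $\|Uc\|\le(1+\alpha)\|Tc\|+\gamma\|c\|_2$ gives $\|U\|\le\sqrt{b}\left(1+\alpha+\frac{\gamma}{\sqrt{b}}\right)$ and hence the stated upper frame bound via $\sum_n|\langle h,\omega_n\rangle|^2=\|U^\ast h\|_2^2$; the canonical dual analysis operator $C$ satisfies $\|C\|\le\frac{1}{\sqrt{a}}$ and $TC=I$, so $\|h-UCh\|\le\left(\alpha+\frac{\gamma}{\sqrt{a}}\right)\|h\|$, the Neumann series inverts $S=UC$, and the factorization $h=U(CS^{-1}h)$ with Cauchy--Schwarz yields exactly $a\left(1-\left(\alpha+\frac{\gamma}{\sqrt{a}}\right)\right)^2$; the density extension you flag is indeed harmless once $U$ is bounded. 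However, your route is genuinely different from the paper's. The paper does not prove this theorem directly at all: in Remark \ref{OURCOROLLARY} it derives it as a special case of the general p-ASF perturbation result (Theorem \ref{OURPERTURBATION}), setting $f_n=\langle\cdot,\tau_n\rangle$, $g_n=f_n$, $r=s=t=\beta=0$, invoking the Hilding-type invertibility lemma (Theorem \ref{cc1}) applied to $S_{g,\omega}S_{f,\tau}^{-1}$, and then recovering the frame property of $\{\omega_n\}_n$ from surjectivity of $\theta_\omega$ via Theorem 5.4.1 of \cite{OLEBOOK}. That top-down approach buys generality --- it perturbs both functionals and vectors in arbitrary separable Banach spaces --- but its specialized bounds are expressed through $\|S_{f,\tau}^{-1}\|$, $\|\theta_\tau\|$, $\|\theta_f\|$ and do not directly reproduce the classical constants $a\left(1-\left(\alpha+\frac{\gamma}{\sqrt{a}}\right)\right)^2$ and $b\left(1+\left(\alpha+\frac{\gamma}{\sqrt{b}}\right)\right)^2$, which the remark does not track. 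Your argument, by contrast, is self-contained and elementary (Neumann series plus Cauchy--Schwarz) and delivers the sharp stated bounds; it is in essence Christensen's original 1995 proof from \cite{PALEY2}, which the paper cites rather than reproduces.
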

Casazza and Christensen  extended the Theorem \ref{SECONDPER} further in 1997, and obtained the next theorem.
\begin{theorem}\cite{PALEY3}\label{OLECAZASSA}
	Let $ \{\tau_n\}_{n=1}^\infty$ be a frame for  $\mathcal{H} $ with bounds $ a$ and $b$.  If  $ \{\omega_n\}_{n=1}^\infty$  in $\mathcal{H} $ is  such that there exist $ \alpha, \beta, \gamma \geq0$ with $ \max\{\alpha+\frac{\gamma}{\sqrt{a}}, \beta\}<1$ and
	$$\left\|\sum_{n=1}^{m}c_n(\tau_n-\omega_n) \right\|\leq \alpha\left\|\sum_{n=1}^{m}c_n\tau_n\right \|+\gamma \left(\sum_{n=1}^{m}|c_n|^2\right)^\frac{1}{2}+\beta\left\|\sum_{n=1}^{m}c_n\omega_n\right \|,   \quad\forall c_1,  \dots, c_m \in \mathbb{K}, m=1, \dots, $$
	then it is a frame for $\mathcal{H} $  with bounds $a\left(1-\frac{\alpha+\beta+\frac{\gamma}{\sqrt{a}}}{1+\beta}\right)^2 $ and $b\left(1+\frac{\alpha+\beta+\frac{\gamma}{\sqrt{b}}}{1-\beta}\right)^2.$
\end{theorem}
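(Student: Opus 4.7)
The plan is to establish the Bessel property (upper frame bound) for $\{\omega_n\}$ first and then derive the lower frame bound via a cross-synthesis operator constructed from the canonical dual expansion of $\{\tau_n\}$. For the Bessel half, I would start from the triangle inequality $\|\sum c_n\omega_n\|\le\|\sum c_n\tau_n\|+\|\sum c_n(\tau_n-\omega_n)\|$, substitute the perturbation hypothesis together with the Bessel estimate $\|\sum c_n\tau_n\|\le\sqrt{b}\,\|c\|_{\ell^2}$ for $\{\tau_n\}$, and then use $\beta<1$ to isolate $\|\sum c_n\omega_n\|$ on the left-hand side. Rearranging delivers $\|\sum c_n\omega_n\|\le\sqrt{b}\bigl(1+\tfrac{\alpha+\beta+\gamma/\sqrt{b}}{1-\beta}\bigr)\|c\|_{\ell^2}$, and the upper frame bound $b\bigl(1+\tfrac{\alpha+\beta+\gamma/\sqrt{b}}{1-\beta}\bigr)^{2}$ follows from the standard duality between Bessel bounds and synthesis-operator norms.

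For the lower bound, let $S$ be the frame operator of $\{\tau_n\}$ and introduce $T\colon\mathcal{H}\to\mathcal{H}$ defined by $Th=\sum_{n}\langle S^{-1}h,\tau_n\rangle\omega_n$. Since $h=\sum\langle S^{-1}h,\tau_n\rangle\tau_n$ by the frame reconstruction formula, applying the perturbation inequality with the coefficients $c_n=\langle S^{-1}h,\tau_n\rangle$ and using the identity $\sum|c_n|^{2}=\langle S^{-1}h,h\rangle\le\|h\|^{2}/a$ gives $\|h-Th\|\le(\alpha+\gamma/\sqrt{a})\|h\|+\beta\|Th\|$. Chaining this with $\|h\|-\|Th\|\le\|h-Th\|$ and solving for $\|Th\|$ produces the key one-sided estimate $\|Th\|\ge\tfrac{1-\alpha-\gamma/\sqrt{a}}{1+\beta}\|h\|$, whose coefficient is strictly positive by the hypothesis $\alpha+\gamma/\sqrt{a}<1$; in particular $T$ is injective with closed range.

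The main obstacle is promoting this one-sided estimate to full invertibility of $T$. A naive Neumann-series approach does not suffice: the perturbation inequality, together with $\|Th\|\le\|h\|+\|h-Th\|$, only yields $\|I-T\|\le\tfrac{\alpha+\beta+\gamma/\sqrt{a}}{1-\beta}$, which drops below $1$ only under the strictly stronger condition $\alpha+2\beta+\gamma/\sqrt{a}<1$. Under the weaker hypothesis actually assumed, I expect to recover surjectivity of $T$ separately by showing that $T^{\ast}$ is injective: if $T^{\ast}g=0$ then $\sum\langle g,\omega_n\rangle\tau_n=0$, and plugging $c_n=\langle g,\omega_n\rangle$ into the perturbation inequality together with the already established one-sided estimate for $T$ applied to $g$ should force $g=0$. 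Once $T$ is invertible with $\|T^{-1}\|\le\tfrac{1+\beta}{1-\alpha-\gamma/\sqrt{a}}$, I would finish by writing any $h\in\mathcal{H}$ as $h=\sum d_n\omega_n$ with $d_n=\langle S^{-1}T^{-1}h,\tau_n\rangle$ and $\|d\|_{\ell^2}\le\|T^{-1}\|\,\|h\|/\sqrt{a}$, and applying Cauchy--Schwarz to $\|h\|^{2}=\bigl|\langle\sum d_n\omega_n,h\rangle\bigr|\le\|d\|_{\ell^2}\bigl(\sum|\langle h,\omega_n\rangle|^{2}\bigr)^{1/2}$ to extract the claimed lower frame bound $a\bigl(1-\tfrac{\alpha+\beta+\gamma/\sqrt{a}}{1+\beta}\bigr)^{2}\|h\|^{2}$.
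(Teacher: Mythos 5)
Your upper-bound computation and your reduction of the lower bound to the single operator inequality $\|h-Th\|\le(\alpha+\gamma/\sqrt{a})\|h\|+\beta\|Th\|$ are both correct, and up to that point you are in fact on the paper's own route: the paper deduces this theorem (Remark \ref{OURCOROLLARY}) from Theorem \ref{OURPERTURBATION}, whose proof arrives at exactly this inequality for the operator $S_{g,\omega}S_{f,\tau}^{-1}$ — which, in the Hilbert specialization $f_n=g_n=\langle\cdot,\tau_n\rangle$, is precisely your $T h=\sum_n\langle S^{-1}h,\tau_n\rangle\omega_n$ — and then invokes Theorem \ref{cc1} to get invertibility. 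The genuine gap in your proposal is exactly where you replace that lemma by the ad hoc claim that injectivity of $T^{*}$ ``should force $g=0$''. It does not, under the stated hypothesis $\max\{\alpha+\gamma/\sqrt{a},\beta\}<1$. Concretely: if $T^{*}g=0$ then $\sum_n\langle g,\omega_n\rangle\tau_n=0$, and plugging $c_n=\langle g,\omega_n\rangle$ into the perturbation inequality yields only $\left\|\sum_n\langle g,\omega_n\rangle\omega_n\right\|\le\frac{\gamma}{1-\beta}\left(\sum_n|\langle g,\omega_n\rangle|^2\right)^{1/2}$, hence $\left(\sum_n|\langle g,\omega_n\rangle|^2\right)^{1/2}\le\frac{\gamma}{1-\beta}\|g\|$ — an estimate, not a contradiction. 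The strongest additional information is $g\perp R(T)$, so $\langle Tg,g\rangle=0$ and $\|g-Tg\|^{2}=\|g\|^{2}+\|Tg\|^{2}$; setting $u=\|Tg\|/\|g\|$ and $\mu=\alpha+\gamma/\sqrt{a}$, your operator inequality becomes $\sqrt{1+u^{2}}\le\mu+\beta u$, which is consistent (admits solutions $u\ge 0$) whenever $\mu^{2}+\beta^{2}\ge 1$ — for instance $\mu=\beta=0.9$, perfectly allowed by the hypothesis. So no contradiction can be extracted; your route establishes surjectivity only under strictly stronger conditions such as $\mu^{2}+\beta^{2}<1$, or $\mu+2\beta<1$ for the cruder estimate, exactly the same phenomenon you already noticed for the Neumann series.

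The missing ingredient is the full strength of Theorem \ref{cc1} (Casazza--Christensen, Casazza--Kalton), which the paper uses as a black box: if $U$ is invertible and $\|Ux-Vx\|\le\lambda_{1}\|Ux\|+\lambda_{2}\|Vx\|$ with $\lambda_{1},\lambda_{2}\in[0,1)$, then $V$ is invertible with $\|V^{-1}\|\le\frac{1+\lambda_{2}}{1-\lambda_{1}}\|U^{-1}\|$. Its proof is not a duality argument but a continuity method: with $W=VU^{-1}$ and $W_{t}=(1-t)I+tW$, one uses $\|Wy\|\le\frac{1}{t}\|W_{t}y\|+\frac{1-t}{t}\|y\|$ to show that every $W_{t}$ satisfies an inequality of the same type with first constant $t\lambda_{1}+(1-t)\lambda_{2}\le\max\{\lambda_{1},\lambda_{2}\}<1$, hence is bounded below by a constant $c>0$ uniform in $t$; then $\{t\in[0,1]:W_{t}\ \text{invertible}\}$ is open and closed in $[0,1]$ and contains $t=0$, so $W=W_{1}$ is invertible. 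Once you cite or prove this lemma in place of your $T^{*}$-argument, the remainder of your proposal is sound: your final extraction of the bounds $a\left(1-\frac{\alpha+\beta+\gamma/\sqrt{a}}{1+\beta}\right)^{2}$ and $b\left(1+\frac{\alpha+\beta+\gamma/\sqrt{b}}{1-\beta}\right)^{2}$ via $\|T^{-1}\|\le\frac{1+\beta}{1-\alpha-\gamma/\sqrt{a}}$, the coefficients $d_n=\langle S^{-1}T^{-1}h,\tau_n\rangle$, and Cauchy--Schwarz is computed correctly.
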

After the developments of theories of Banach frames, Schauder frames and atomic decompositions for separable Banach spaces (see \cite{GROCHENIG, CASAZZA, CASAZZAHANLARSON, CASAZZACHRISTENSENSTOEVA}) Paley-Wiener theorems are derived for Banach frames, Schauder frames and atomic decompositions (see \cite{CHRISTENSENHEIL, CHEN, ZHUWANG, JAINKAUSHIKKUMAR1, JAINKAUSHIKKUMAR2, STOEVA}). In this paper, we derive Paley-Wiener theorem for p-ASFs (Theorem \ref{OURPERTURBATION}). We show that our result gives Theorem \ref{OLECAZASSA} for Hilbert spaces (Remark \ref{OURCOROLLARY}).

 \section{Paley-Wiener theorem for p-approximate Schauder frames}\label{SECTIONTWO}
 Let $\mathcal{X}$ be a separable Banach space and $\mathcal{X}^*$ be its dual. In the rest of this paper, $\{e_n\}_n$ denotes the standard Schauder basis for  $\ell^p(\mathbb{N})$, $p \in [1, \infty)$. We now recall the definition of approximate Schauder frames for separable Banach spaces.
\begin{definition}\cite{FREEMANODELL, THOMAS}\label{ASFDEF}
	Let $\{\tau_n\}_n$ be a sequence in  $\mathcal{X}$ and 	$\{f_n\}_n$ be a sequence in  $\mathcal{X}^*.$ The pair $ (\{f_n \}_{n}, \{\tau_n \}_{n}) $ is said to be an approximate Schauder frame (ASF) for $\mathcal{X}$ if 
	\begin{align}\label{ASFEQUA}
	S_{f, \tau}:\mathcal{X}\ni x \mapsto S_{f, \tau}x\coloneqq \sum_{n=1}^\infty
	f_n(x)\tau_n \in
	\mathcal{X}
	\end{align}
	is a well-defined bounded linear, invertible operator.
\end{definition} 
Following \cite{MAHESHJOHNSON}, real  $a,b>0$ satisfying 
\begin{align*}
a\|x\|\leq \left\|\sum_{n=1}^\infty
f_n(x)\tau_n \right\|\leq b\|x\|, \quad \forall x \in  \mathcal{X}
\end{align*} 
are called  as  lower ASF bound and  upper ASF bound, respectively. There is a particular case of ASFs studied by the authors of this paper which contains many important properties of frames for Hilbert spaces (see \cite{MAHESHJOHNSON}).
\begin{definition}\cite{MAHESHJOHNSON}\label{PASFDEF}
	An ASF $ (\{f_n \}_{n}, \{\tau_n \}_{n}) $  for $\mathcal{X}$	is said to be a p-ASF, $p \in [1, \infty)$ if both the maps 
	\begin{align*}
	& \theta_f: \mathcal{X}\ni x \mapsto \theta_f x\coloneqq \{f_n(x)\}_n \in \ell^p(\mathbb{N}) \text{ and } \\
	&\theta_\tau : \ell^p(\mathbb{N}) \ni \{a_n\}_n \mapsto \theta_\tau \{a_n\}_n\coloneqq \sum_{n=1}^\infty a_n\tau_n \in \mathcal{X}
	\end{align*}
	are well-defined bounded linear operators. 
\end{definition}
  In order to derive Paley-Wiener theorem for p-ASFs, we need a generalization of result of   Hilding \cite{HILDING}.
  \begin{theorem}\cite{CASAZZAKALTON, PALEY3}\label{cc1}
  	Let $ \mathcal{X}, \mathcal{Y}$ be Banach spaces, $ U : \mathcal{X}\rightarrow \mathcal{Y}$ be a bounded invertible operator. If  a bounded linear  operator $ V : \mathcal{X}\rightarrow \mathcal{Y}$ is  such that there exist  $ \alpha, \beta \in \left [0, 1  \right )$ with 
  	$$ \|Ux-Vx\|\leq\alpha\|Ux\|+\beta\|Vx\|,\quad \forall x \in  \mathcal{X},$$
  	then $ V $ is  invertible and 
  	
  \begin{align*}
  &\frac{1-\alpha}{1+\beta}\|Ux\|\leq\|Vx\|\leq\frac{1+\alpha}{1-\beta} \|Ux\|, \quad\forall x \in  \mathcal{X}\\
  &\frac{1-\beta}{1+\alpha}\frac{1}{\|U\|}\|y\|\leq\|V^{-1}y\|\leq\frac{1+\beta}{1-\alpha} \|U^{-1}\|\|y\|, \quad\forall y \in  \mathcal{Y}.
  \end{align*} 
  \end{theorem}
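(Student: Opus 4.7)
The plan is to first extract sharp two-sided inequalities between $\|Ux\|$ and $\|Vx\|$ from the hypothesis, use them to show that $V$ is bounded below, and then obtain surjectivity of $V$ via a homotopy argument; the inverse estimates will then follow by substitution.

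The first step is to apply the reverse triangle inequality $|\,\|Ux\|-\|Vx\|\,|\leq\|Ux-Vx\|$ in the hypothesis, which yields $(1-\beta)\|Vx\|\leq(1+\alpha)\|Ux\|$ and $(1-\alpha)\|Ux\|\leq(1+\beta)\|Vx\|$; together these give the asserted two-sided bound $\frac{1-\alpha}{1+\beta}\|Ux\|\leq\|Vx\|\leq\frac{1+\alpha}{1-\beta}\|Ux\|$. Combined with $\|Ux\|\geq\|x\|/\|U^{-1}\|$ (which holds because $U$ is invertible), this already shows that $V$ is bounded below and so is injective with closed range.

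The main step is to prove that $V$ is surjective. I would transfer the problem to $\mathcal{Y}$ by setting $W:=VU^{-1}$, so the hypothesis becomes $\|y-Wy\|\leq\alpha\|y\|+\beta\|Wy\|$ for all $y\in\mathcal{Y}$, and it suffices to show $W$ is invertible. To this end I would introduce the homotopy $S_t:=(1-t)I+tW$ for $t\in[0,1]$, for which $S_0=I$ and $S_1=W$. Since $y-S_ty=t(y-Wy)$ and $Wy=t^{-1}(S_ty-(1-t)y)$ for $t>0$, repeating the reverse-triangle manipulation on $S_t$ yields the \emph{uniform} lower bound
\begin{equation*}
\|S_t y\|\;\geq\;\frac{1-t\alpha-(1-t)\beta}{1+\beta}\|y\|\;\geq\;\frac{1-\max(\alpha,\beta)}{1+\beta}\|y\|,\qquad y\in\mathcal{Y},\ t\in[0,1],
\end{equation*}
which is strictly positive since $\alpha,\beta<1$. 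Consequently, whenever $S_t$ is invertible, its inverse satisfies an operator-norm bound by a constant $K$ independent of $t$. A standard Neumann-series perturbation then gives that the set $A:=\{t\in[0,1] : S_t \text{ is invertible}\}$ contains a fixed-size neighborhood $(t_0-\rho,t_0+\rho)\cap[0,1]$ with $\rho := 1/(K\|W-I\|)>0$ around every one of its points; since $0\in A$, iterating this enlargement finitely many times forces $A=[0,1]$, and in particular $W=S_1$ is invertible, so $V=WU$ is invertible. The inverse estimates then come out by substituting $x=V^{-1}y$ into the two-sided inequalities of the first step and using $\|y\|/\|U\|\leq\|U^{-1}y\|\leq\|U^{-1}\|\|y\|$ to pass from $UV^{-1}y$ to $V^{-1}y$.

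The principal obstacle is the surjectivity of $V$. A naive Neumann-series attack on $I-W$ would require $\|I-W\|<1$, but the hypothesis only yields $\|I-W\|\leq(\alpha+\beta)/(1-\beta)$, which need not be less than $1$ under the sole assumption $\alpha,\beta\in[0,1)$; examples like a planar rotation through an angle close to $2\pi/3$ satisfy the hypothesis yet have $\|I-W\|>1$. The homotopy device circumvents this because the effective constant $t\alpha+(1-t)\beta$ along the path is a convex combination of numbers in $[0,1)$ and so stays bounded away from $1$, supplying the uniform lower bound that keeps the whole path inside the open set of invertible bounded operators on $\mathcal{Y}$.
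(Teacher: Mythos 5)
The paper does not prove Theorem \ref{cc1} at all --- it is quoted with citations to Casazza--Kalton and Casazza--Christensen --- so there is no in-paper proof to compare against; your argument is, however, essentially the standard proof from those sources (going back to Hilding's continuity method): reduce to $W=VU^{-1}$ satisfying $\|y-Wy\|\leq\alpha\|y\|+\beta\|Wy\|$, establish the uniform lower bound $\|S_ty\|\geq\frac{1-t\alpha-(1-t)\beta}{1+\beta}\|y\|$ along the segment $S_t=(1-t)I+tW$, and conclude invertibility of $S_1=W$ from the uniform-radius openness of the invertibles together with connectedness of $[0,1]$. I checked the computations: the two-sided bounds from the reverse triangle inequality, the lower bound for $S_t$ (using $Wy=t^{-1}(S_ty-(1-t)y)$ for $t>0$, with $t=0$ trivial), and the substitution $x=V^{-1}y$ yielding exactly the stated inverse estimates are all correct. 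The only cosmetic point is the degenerate case $W=I$, where your radius $\rho=1/(K\|W-I\|)$ is undefined but the conclusion is trivial; this does not affect correctness.
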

In the sequel, the standard Schauder basis for $\ell^p(\mathbb{N})$ is denoted by $\{e_n \}_{n}$.
\begin{theorem}\label{OURPERTURBATION}
Let $ (\{f_n \}_{n}, \{\tau_n \}_{n}) $ be a p-ASF for $\mathcal{X}$. Assume that a collection $\{\tau_n \}_{n} $ in $\mathcal{X}$ and a collection $ \{g_n \}_{n}	$ in $\mathcal{X}^*$  are such that there exist $r,s,t,\alpha, \beta, \gamma \geq 0$ with  $ \max\{\alpha+\gamma\|\theta_f S_{f,\tau}^{-1}\|, \beta,s\}<1$ and 
\begin{align}\label{BEFORE12}
\left\|\sum_{n=1}^{m}(f_n-g_n)(x)e_n\right\|\leq r\left\|\sum_{n=1}^{m}f_n(x)e_n\right \|+t \|x\|+s\left\|\sum_{n=1}^{m}g_n(x)e_n\right \|,  \quad\forall x  \in \mathcal{X}, m=1, \dots,
\end{align}
\begin{align}\label{PEREQUATIONA}
\left\|\sum_{n=1}^{m}c_n(\tau_n-\omega_n)\right\|\leq \alpha\left\|\sum_{n=1}^{m}c_n\tau_n\right \|+\gamma \left(\sum_{n=1}^{m}|c_n|^p\right)^\frac{1}{p}+\beta\left\|\sum_{n=1}^{m}c_n\omega_n\right \|,  \quad\forall c_1,  \dots, c_m \in \mathbb{K}, m=1, \dots.
\end{align}
Then $ (\{g_n \}_{n}, \{\omega_n \}_{n}) $ is a p-ASF for $\mathcal{X}$ with bounds 
\begin{align*}
 \frac{1-(\alpha+\gamma\|\theta_f S_{f,\tau}^{-1}\|)}{(1+\beta)\|S_{f,\tau}^{-1}\|} \quad \text{and} \quad  \left(\frac{1+\alpha}{1-\beta}\|\theta_\tau\|+\frac{\gamma}{1-\beta}\right)\left(\frac{1+r}{1-s}\|\theta_f\|+\frac{t}{1-s}\right).
\end{align*}
\end{theorem}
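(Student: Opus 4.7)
The plan is to verify the three defining properties of a p-ASF for the pair $(\{g_n\}_n,\{\omega_n\}_n)$ in turn: boundedness of the analysis operator $\theta_g:\mathcal{X}\to \ell^p(\mathbb{N})$, boundedness of the synthesis operator $\theta_\omega:\ell^p(\mathbb{N})\to\mathcal{X}$, and invertibility of the frame operator $S_{g,\omega}=\theta_\omega\theta_g$ with the prescribed lower bound. The first two steps deliver the upper ASF bound and are essentially rearrangements of the hypotheses, while the third is set up so as to fall under Theorem \ref{cc1} applied with $U=S_{f,\tau}$.

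For $\theta_g$, I would let $m\to\infty$ in (\ref{BEFORE12}) and use $s<1$ to isolate $\|\theta_g x\|$, obtaining $\|\theta_g x\|\leq \tfrac{1+r}{1-s}\|\theta_f\|\,\|x\|+\tfrac{t}{1-s}\|x\|$. For $\theta_\omega$, I would apply (\ref{PEREQUATIONA}) to a finite scalar sequence $(c_1,\dots,c_m)$ and use $\beta<1$ to isolate $\bigl\|\sum c_n\omega_n\bigr\|$, obtaining
\[\Bigl\|\sum_{n=1}^m c_n\omega_n\Bigr\|\leq \Bigl(\frac{1+\alpha}{1-\beta}\|\theta_\tau\|+\frac{\gamma}{1-\beta}\Bigr)\Bigl(\sum_{n=1}^m|c_n|^p\Bigr)^{1/p},\]
so that the partial sums are Cauchy and the map extends by density to a bounded operator on all of $\ell^p(\mathbb{N})$. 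The composition $S_{g,\omega}=\theta_\omega\theta_g$ is then bounded by the product of these two estimates, which is precisely the upper bound of the theorem.

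For the lower bound the aim is to establish
\[\|S_{f,\tau}x-S_{g,\omega}x\|\leq \bigl(\alpha+\gamma\|\theta_f S_{f,\tau}^{-1}\|\bigr)\|S_{f,\tau}x\|+\beta\|S_{g,\omega}x\|,\qquad\forall x\in\mathcal{X},\]
so that Theorem \ref{cc1} with $U=S_{f,\tau}$, $V=S_{g,\omega}$, $\alpha'=\alpha+\gamma\|\theta_f S_{f,\tau}^{-1}\|<1$ and $\beta'=\beta<1$ immediately yields both the invertibility of $S_{g,\omega}$ and the estimate $\|S_{g,\omega}x\|\geq \tfrac{1-\alpha'}{(1+\beta)\|S_{f,\tau}^{-1}\|}\|x\|$, which is the claimed lower ASF bound. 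The natural first move is to take $c_n=f_n(x)$ in (\ref{PEREQUATIONA}), pass to the limit in $m$, and convert the $\gamma$ term via $\|\theta_f x\|=\|\theta_f S_{f,\tau}^{-1}S_{f,\tau}x\|\leq \|\theta_f S_{f,\tau}^{-1}\|\,\|S_{f,\tau}x\|$, which accounts for the appearance of $\|\theta_f S_{f,\tau}^{-1}\|$ in the bound.

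The main obstacle is that the choice $c_n=f_n(x)$ controls $\|S_{f,\tau}x-\theta_\omega\theta_f x\|$ and yields a term $\beta\|\theta_\omega\theta_f x\|$, rather than the desired $\|S_{f,\tau}x-S_{g,\omega}x\|$ and $\beta\|S_{g,\omega}x\|$. The gap between $\theta_\omega\theta_f x$ and $S_{g,\omega}x=\theta_\omega\theta_g x$ equals $\theta_\omega(\theta_f-\theta_g)x$, and this is where (\ref{BEFORE12}) must enter, in combination with the bounds on $\theta_g$ and $\theta_\omega$ from the first two steps, to complete the estimate. Arranging this bridging term so that the final inequality still has exactly the form required by Theorem \ref{cc1}---with the constants appearing as in the stated lower bound---is the crux of the argument.
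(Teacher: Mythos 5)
Your first two steps coincide with the paper's proof: isolating the partial-sum estimates from (\ref{BEFORE12}) and (\ref{PEREQUATIONA}) gives $\|\theta_g\|\leq\frac{1+r}{1-s}\|\theta_f\|+\frac{t}{1-s}$ and $\|\theta_\omega\|\leq\frac{1+\alpha}{1-\beta}\|\theta_\tau\|+\frac{\gamma}{1-\beta}$, and $\|S_{g,\omega}\|\leq\|\theta_\omega\|\|\theta_g\|$ is exactly the paper's upper bound. The problem is the step you correctly flag as the crux and then leave open. Substituting $\{c_n\}_n=\theta_fS_{f,\tau}^{-1}x$ into the limiting form of (\ref{PEREQUATIONA}) controls $\|x-\theta_\omega\theta_fS_{f,\tau}^{-1}x\|$ with a defect term $\beta\|\theta_\omega\theta_fS_{f,\tau}^{-1}x\|$; to invoke Theorem \ref{cc1} with $V=S_{g,\omega}$ you still need to replace $\theta_\omega\theta_f$ by $\theta_\omega\theta_g=S_{g,\omega}$. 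The bridging term $\theta_\omega(\theta_f-\theta_g)S_{f,\tau}^{-1}x$ cannot be absorbed with the stated constants: (\ref{BEFORE12}) only bounds it by $\|\theta_\omega\|\bigl(r\|\theta_fS_{f,\tau}^{-1}x\|+t\|S_{f,\tau}^{-1}x\|+s\|\theta_gS_{f,\tau}^{-1}x\|\bigr)$, which injects $r,s,t$ and $\|\theta_\omega\|$ into the perturbation constant, whereas the claimed lower bound $\frac{1-(\alpha+\gamma\|\theta_fS_{f,\tau}^{-1}\|)}{(1+\beta)\|S_{f,\tau}^{-1}\|}$ contains no trace of them. In fact no bridging argument can succeed under the stated hypotheses: take $\omega_n=\tau_n$ and $g_n=0$ for all $n$, with $\alpha=\beta=\gamma=r=s=0$ and $t=\|\theta_f\|$. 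Then (\ref{PEREQUATIONA}) holds trivially, (\ref{BEFORE12}) holds because $\left\|\sum_{n=1}^{m}f_n(x)e_n\right\|\leq\|\theta_f\|\|x\|$, and $\max\{\alpha+\gamma\|\theta_fS_{f,\tau}^{-1}\|,\beta,s\}=0<1$, yet $S_{g,\omega}=0$ is not invertible. So the invertibility conclusion itself fails without a smallness condition tying $r,s,t$ to the other data.

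For what it is worth, the paper's own proof makes precisely the identification you were suspicious of: after the substitution it silently rewrites $\theta_\omega\theta_fS_{f,\tau}^{-1}x$ as $S_{g,\omega}S_{f,\tau}^{-1}x$, which is legitimate only when $\theta_g=\theta_f$ (the situation $g_n=f_n$ of Remark \ref{OURCOROLLARY}); in the paper, hypothesis (\ref{BEFORE12}) is used only for the boundedness of $\theta_g$ and never enters the invertibility argument. So your proposal is incomplete at the crux, but the incompleteness reflects a genuine flaw in the theorem as stated rather than a missed idea: to finish along these lines one must either assume $g_n=f_n$, or strengthen the hypotheses so that $\|\theta_\omega\|\bigl(r\|\theta_f S_{f,\tau}^{-1}\|+t\|S_{f,\tau}^{-1}\|+s\|\theta_g S_{f,\tau}^{-1}\|\bigr)$ can be added to $\alpha+\gamma\|\theta_fS_{f,\tau}^{-1}\|$ while keeping the total below $1$, with the lower bound adjusted accordingly.
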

\begin{proof}
	For $ m=1, \dots,$  for each  $x \in \mathcal{X}$ and for every $c_1,  \dots, c_m \in \mathbb{K}$, 
	 \begin{align*}
	\left\| \sum\limits_{n=1}^mg_n(x)e_n\right\|&\leq \left\| \sum\limits_{n=1}^m(f_n-g_n)(x)e_n\right\|+\left\| \sum\limits_{n=1}^mf_n(x)e_n\right\|\\
	&\leq(1+r)\left\| \sum\limits_{n=1}^mf_n(x)e_n\right\|+s\left\| \sum\limits_{n=1}^mg_n(x)e_n\right\|+t\|x\|
	\end{align*}
	and 
	\begin{align*}
	\left\|\sum_{n=1}^{m}c_n\omega_n\right\|&\leq \left\|\sum_{n=1}^{m}c_n(\tau_n-\omega_n)\right\|+\left\|\sum_{n=1}^{m}c_n\tau_n\right\|\\
	&\leq (1+\alpha)\left\|\sum_{n=1}^{m}c_n\tau_n\right \|+\gamma \left(\sum_{n=1}^{m}|c_n|^p\right)^\frac{1}{p}+\beta\left\|\sum_{n=1}^{m}c_n\omega_n\right \|.
	\end{align*}
	Hence 
		\begin{align*}
	\left\| \sum\limits_{n=1}^mg_n(x)e_n\right\|\leq\frac{1+r}{1-s}\left\| \sum\limits_{n=1}^mf_n(x)e_n\right\|+\frac{t}{1-s}\|x\|,  \quad\forall x  \in \mathcal{X}, m=1, \dots
	\end{align*}
	and 
	\begin{align*}
	\left\| \sum\limits_{n=1}^mc_n\omega_n\right\|\leq\frac{1+\alpha}{1-\beta}\left\| \sum\limits_{n=1}^mc_n\tau_n\right\|+\frac{\gamma}{1-\beta}\left( \sum\limits_{n=1}^m|c_n|^p\right)^\frac{1}{p},  \quad\forall c_1,  \dots, c_m \in \mathbb{K}, m=1, \dots.
	\end{align*}
	Therefore $\theta_g$ and  $\theta_\omega$ are well-defined bounded linear operators with 
	\begin{align*}
	\|\theta_g\|\leq \frac{1+r}{1-s}\|\theta_f\|+\frac{t}{1-s},  \quad \|\theta_\omega\|\leq\frac{1+\alpha}{1-\beta}\|\theta_\tau\|+\frac{\gamma}{1-\beta}.
	\end{align*} 
	Now Equation (\ref{PEREQUATIONA}) gives 
	\begin{align*}
	\left\|\sum_{n=1}^{\infty}c_n(\tau_n-\omega_n)\right\|\leq \alpha\left\|\sum_{n=1}^{\infty}c_n\tau_n\right \|+\gamma \left(\sum_{n=1}^{\infty}|c_n|^p\right)^\frac{1}{p}+\beta\left\|\sum_{n=1}^{\infty}c_n\omega_n\right \|,  \quad\forall  \{c_n\}_n \in \ell^p(\mathbb{N}).
	\end{align*}
	That is 
	\begin{align}\label{PEREQUATIONB}
	\|\theta_\tau \{c_n\}_n-\theta_\omega \{c_n\}_n\|\leq \alpha \|\theta_\tau \{c_n\}_n\|+\gamma\left( \sum\limits_{n=1}^\infty|c_n|^p\right)^\frac{1}{p}+\beta \|\theta_\omega \{c_n\}_n\|, \quad\forall \{c_n\}_n \in \ell^p(\mathbb{N}).
	\end{align}
	By taking $\{c_n\}_n =\{f_n(S_{f,\tau}^{-1}x)\}_n=\theta_fS_{f,\tau}^{-1}x$ in Equation (\ref{PEREQUATIONB}), we get 
	\begin{align*}
	\|\theta_\tau \theta_fS_{f,\tau}^{-1}x-\theta_\omega \theta_fS_{f,\tau}^{-1}x\|\leq \alpha \|\theta_\tau \theta_fS_{f,\tau}^{-1}x\|+\gamma\left( \sum\limits_{n=1}^\infty|f_n(S_{f,\tau}^{-1}x)|^p\right)^\frac{1}{p}+\beta \|\theta_\omega\theta_fS_{f,\tau}^{-1}x\|,  \quad\forall  x \in \mathcal{X}.
	\end{align*}
	That is, 
	\begin{align*}
	\|x-S_{g,\omega}S_{f,\tau}^{-1}x\|&\leq \alpha \| x\|+\gamma\|\theta_fS_{f,\tau}^{-1}x\|+\beta \|S_{g,\omega}S_{f,\tau}^{-1}x\|\\
	&\leq  (\alpha +\gamma\|\theta_fS_{f,\tau}^{-1}\|)\|x\|+\beta \|S_{g,\omega}S_{f,\tau}^{-1}x\|, \quad\forall  x \in \mathcal{X}.
	\end{align*}
Since $ \max\{\alpha+\gamma\|\theta_f S_{f,\tau}^{-1}\|, \beta\}<1$, we can use Theorem \ref{cc1} to get the operator $S_{g,\omega}S_{f,\tau}^{-1}$ is invertible and 
\begin{align*}
\|(S_{g,\omega} S_{f,\tau}^{-1})^{-1}\| \leq \frac{1+\beta}{1-(\alpha+\gamma\|\theta_f S_{f,\tau}^{-1}\|)}.
\end{align*}
 Hence the operator $S_{g,\omega}=(S_{g,\omega}S_{f,\tau}^{-1})S_{f,\tau}$ is invertible. Therefore $ (\{g_n \}_{n}, \{\omega_n \}_{n}) $ is a p-ASF for $\mathcal{X}$. We get the frame bounds from the following calculations:
 \begin{align*}
  &\| S_{g,\omega}^{-1}\|\leq\|S_{f,\tau}^{-1}\|\| S_{f,\tau}S_{g,\omega}^{-1}\| \leq \frac{\|S_{f,\tau}^{-1}\|(1+\beta)}{1-(\alpha+\gamma\|\theta_f S_{f,\tau}^{-1}\|)}\quad\text{ and }\\
  &\|S_{g,\omega}\|\leq \|\theta_\omega\|\|\theta_g\|\leq \left(\frac{1+\alpha}{1-\beta}\|\theta_\tau\|+\frac{\gamma}{1-\beta}\right)\left(\frac{1+r}{1-s}\|\theta_f\|+\frac{t}{1-s}\right).
 \end{align*}
 \end{proof}
\begin{remark}\label{OURCOROLLARY}
	Theorem \ref{OLECAZASSA} is a corollary for Theorem \ref{OURPERTURBATION}. In particular, Theorems \ref{FIRSTPER} and \ref{SECONDPER} are corollaries for Theorem  \ref{OURPERTURBATION}. Indeed, 
let $\{\tau_n\}_n$ be a frame for  $\mathcal{H}$. We define
\begin{align*}
f_n:\mathcal{H} \ni h \mapsto f_n(h)\coloneqq \langle h, \tau_n\rangle \in \mathbb{K}, \quad \forall n \in \mathbb{N}.
\end{align*}	
Then $\theta_f=\theta_\tau$ and  $ (\{f_n \}_{n}, \{\tau_n \}_{n}) $ is  a 2-approximate frame   for $\mathcal{H}$.	We also define 
\begin{align*}
g_n\coloneqq f_n, \quad \forall n \in \mathbb{N}.
\end{align*}
Then condition (\ref{BEFORE12}) holds trivially.
Theorem \ref{OURPERTURBATION} now says that $ (\{g_n \}_{n}, \{\omega_n \}_{n}) $ is a p-ASF for $\mathcal{X}$. To prove Theorem \ref{OLECAZASSA}, it now suffices to prove that $\{\omega_n\}_n$ is a frame for  $\mathcal{H}$. Since $ (\{g_n \}_{n}, \{\omega_n \}_{n}) $ is a p-ASF for $\mathcal{X}$, it follows that $\theta_\omega$ is surjective. Theorem 5.4.1 in \cite{OLEBOOK} now says that $\{\omega_n\}_n$ is a frame for $\mathcal{H}$.
\end{remark}
\begin{corollary}
Let $q$ be the conjugate index of $p$. Let $ (\{f_n \}_{n}, \{\tau_n \}_{n}) $ be a p-ASF for $\mathcal{X}$. Assume that a collection $\{\tau_n \}_{n} $ in $\mathcal{X}$ and a collection $ \{g_n \}_{n}	$ in $\mathcal{X}^*$  are  such that $\sum_{n=1}^{\infty}\|f_n-g_n\|<\infty$
 and
	$$ \lambda \coloneqq \sum_{n=1}^\infty\|\tau_n-\omega_n\|^p <\frac{1}{\|\theta_f S_{f,\tau}^{-1}\|^p}.$$
Then $ (\{g_n \}_{n}, \{\omega_n \}_{n}) $ is a p-ASF for $\mathcal{X}$ with bounds $ \frac{1-\lambda^{1/p}\|\theta_f S_{f,\tau}^{-1}\|}{\|S_{f,\tau}^{-1}\|}$ and $(\|\theta_\tau\|+\lambda^{1/p})(\|\theta_f\|+\sum_{n=1}^{\infty}\|f_n-g_n\|) $.
\end{corollary}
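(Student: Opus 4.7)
The plan is to apply Theorem \ref{OURPERTURBATION} with the parameter choices
\[ r = s = 0, \quad t = \sum_{n=1}^{\infty}\|f_n-g_n\|, \quad \alpha = \beta = 0, \quad \gamma = \lambda^{1/p}. \]
With these values, the smallness condition $\max\{\alpha + \gamma\|\theta_f S_{f,\tau}^{-1}\|,\,\beta,\,s\}<1$ collapses to the standing assumption $\lambda^{1/p}\|\theta_f S_{f,\tau}^{-1}\|<1$, and the lower and upper bounds produced by Theorem \ref{OURPERTURBATION} simplify to precisely the two expressions asserted in the corollary. So the whole task reduces to checking that the two structural inequalities (\ref{BEFORE12}) and (\ref{PEREQUATIONA}) hold with these constants.

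For (\ref{BEFORE12}), I would combine the pointwise bound $|(f_n-g_n)(x)|\leq \|f_n-g_n\|\,\|x\|$ with the embedding $\ell^1 \hookrightarrow \ell^p$, i.e.\ $\bigl(\sum a_n^p\bigr)^{1/p}\leq \sum a_n$ for non-negative $a_n$, to obtain
\[ \left\|\sum_{n=1}^m (f_n-g_n)(x) e_n\right\| = \left(\sum_{n=1}^m |(f_n-g_n)(x)|^p\right)^{1/p} \leq \left(\sum_{n=1}^{\infty}\|f_n-g_n\|\right)\|x\| = t\|x\|. \]
For (\ref{PEREQUATIONA}), I would bound $\|\sum c_n(\tau_n-\omega_n)\|$ by $\sum |c_n|\|\tau_n-\omega_n\|$ via the triangle inequality, then apply H\"older's inequality with the conjugate pair $(p,q)$ on the right to factor out the $\ell^p$-norm of $\{c_n\}$, and finally dominate the remaining $\ell^q$-norm of $\{\|\tau_n-\omega_n\|\}_n$ by $\lambda^{1/p}$ using the monotonicity of $\ell^r$-norms on counting measure.

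Once both inequalities are established, a single invocation of Theorem \ref{OURPERTURBATION} delivers that $(\{g_n\}_n,\{\omega_n\}_n)$ is a p-ASF, and the advertised bounds follow by direct substitution of the chosen constants into the formulas of that theorem. The only delicate step is the last one of the verification of (\ref{PEREQUATIONA}): H\"older naturally pairs the $\ell^p$-norm of $\{c_n\}$ with the $\ell^q$-norm of $\{\|\tau_n-\omega_n\|\}_n$, while the conclusion is stated in terms of $\lambda^{1/p}=\bigl(\sum\|\tau_n-\omega_n\|^p\bigr)^{1/p}$, so the H\"older estimate must be set up so that the $\ell^r$-norm comparison between exponents $p$ and $q$ goes in the right direction.
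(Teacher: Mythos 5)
Your proposal coincides with the paper's own proof essentially line for line: the paper also takes $r=s=0$, $t=\sum_{n=1}^{\infty}\|f_n-g_n\|$, $\alpha=\beta=0$, $\gamma=\lambda^{1/p}$, verifies (\ref{BEFORE12}) exactly as you do via $|(f_n-g_n)(x)|\leq\|f_n-g_n\|\|x\|$ and the $\ell^1\hookrightarrow\ell^p$ embedding, and verifies (\ref{PEREQUATIONA}) by H\"older with the conjugate pair $(p,q)$ followed by the comparison $\left(\sum_{n=1}^{m}\|\tau_n-\omega_n\|^q\right)^{1/q}\leq\lambda^{1/p}$; the bounds then drop out of Theorem \ref{OURPERTURBATION} by substitution, as you say.

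However, the ``delicate step'' you flag at the end is a genuine obstruction, not just a matter of setting H\"older up carefully. Monotonicity of $\ell^r$-norms on counting measure gives $\|a\|_q\leq\|a\|_p$ only when $q\geq p$, and for conjugate indices $q\geq p$ holds exactly when $p\leq 2$. For $p>2$ one has $q<p$ and the comparison reverses: with $a_1=a_2=1$ one gets $2^{1/q}>2^{1/p}$, and worse, $\sum_n\|\tau_n-\omega_n\|^p<\infty$ does not even force $\sum_n\|\tau_n-\omega_n\|^q<\infty$ (take $\|\tau_n-\omega_n\|=n^{-1/q}$, so that $\sum_n n^{-p/q}<\infty$ while $\sum_n n^{-1}=\infty$). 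So the $\ell^q$-norm of $\{\|\tau_n-\omega_n\|\}_n$ cannot in general be dominated by $\lambda^{1/p}$, and your argument, as written, establishes the corollary only for $p\in[1,2]$ (the case $p=1$, $q=\infty$ is fine, since $\|a\|_\infty\leq\|a\|_1$). To cover $p>2$ one would have to replace the hypothesis by smallness of $\left(\sum_{n=1}^{\infty}\|\tau_n-\omega_n\|^q\right)^{1/q}$ and take that quantity as $\gamma$. It is worth noting that the paper's own proof performs exactly the same H\"older-plus-monotonicity move without comment, so you have reproduced its argument faithfully, including this unaddressed restriction on $p$; your closing caveat correctly isolates the one step where both arguments are incomplete.
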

\begin{proof}
	Take $r=0, s=0, t=\sum_{n=1}^{\infty}\|f_n-g_n\|,  \alpha =0, \beta=0, \gamma=\lambda^{1/p}$. Then $ \max\{\alpha+\gamma\|\theta_f S_{f,\tau}^{-1}\|, \beta,s\}<1$ and 
	\begin{align*}
	&\left\|\sum_{n=1}^{m}(f_n-g_n)(x)e_n\right\|\leq \left(\sum_{n=1}^{m}\|f_n-g_n\|\right) \|x\|\leq t\|x\|,   \quad\forall x  \in \mathcal{X}, m=1, \dots,\\
 &\left\|\sum\limits_{n=1}^{m}c_n(\tau_n-\omega_n)\right\|\leq \left(\sum\limits_{n=1}^{m}\|\tau_n-\omega_n\|^q \right)^\frac{1}{q}\left(\sum\limits_{n=1}^{m}|c_n|^p\right)^\frac{1}{p}\leq \gamma\left(\sum\limits_{n=1}^{m}|c_n|^p\right)^\frac{1}{p},  \quad \forall c_1,  \dots, c_m \in \mathbb{K}, m=1, \dots.
	\end{align*}
	By using Theorem \ref{OURPERTURBATION} we now get the result.
\end{proof}
We next derive stability result which does not demand  maximum condition on parameters $\alpha$ and $\gamma$.
\begin{theorem}
Let $ (\{f_n \}_{n}, \{\tau_n \}_{n}) $ be a p-ASF for $\mathcal{X}$. Assume that a collection $\{\tau_n \}_{n} $ in $\mathcal{X}$ and a collection $ \{g_n \}_{n}	$ in $\mathcal{X}^*$  are such that there exist $r,s,t,\alpha, \beta, \gamma \geq 0$ with  $ \max\{ \beta,s\}<1$ and
\begin{align*}
&\left\|\sum_{n=1}^{m}(f_n-g_n)(x)e_n\right\|\leq r\left\|\sum_{n=1}^{m}f_n(x)e_n\right \|+t \|x\|+s\left\|\sum_{n=1}^{m}g_n(x)e_n\right \|,   \quad\forall x  \in \mathcal{X}, m=1, \dots,\\
&\left\|\sum_{n=1}^{m}c_n(\tau_n-\omega_n)\right\|\leq \alpha\left\|\sum_{n=1}^{m}c_n\tau_n\right \|+\gamma \left(\sum_{n=1}^{m}|c_n|^p\right)^\frac{1}{p}+\beta\left\|\sum_{n=1}^{m}c_n\omega_n\right \|,   \quad\forall c_1,  \dots, c_m \in \mathbb{K}, m=1, \dots.
\end{align*}	
Assume that one of the following holds. 
  \begin{enumerate}
  	\item $\sum_{n=1}^{\infty}(\|f_n-g_n\|\|S_{f,\tau}^{-1}\tau_n\|+\|g_n\|\|S_{f,\tau}^{-1}(\tau_n-\omega_n)\|)<1.$
  	\item $\sum_{n=1}^{\infty}(\|f_n-g_n\|\|S_{f,\tau}^{-1}\omega_n\|+\|f_n\|\|S_{f,\tau}^{-1}(\tau_n-\omega_n)\|)<1.$
  	\item $\sum_{n=1}^{\infty}(\|(f_n-g_n)S_{f,\tau}^{-1}\|\|\tau_n\|+\|g_nS_{f,\tau}^{-1}\|\|\tau_n-\omega_n\|)<1.$
  	\item $\sum_{n=1}^{\infty}(\|(f_n-g_n)S_{f,\tau}^{-1}\|\|\omega_n\|+\|f_nS_{f,\tau}^{-1}\|\|\tau_n-\omega_n\|)<1$.
  \end{enumerate}
Then $ (\{g_n \}_{n}, \{\omega_n \}_{n}) $ is a p-ASF for $\mathcal{X}$. Moreover, an upper bound is  
\begin{align*}
	 \left(\frac{1+\alpha}{1-\beta}\|\theta_\tau\|+\frac{\gamma}{1-\beta}\right)\left(\frac{1+r}{1-s}\|\theta_f\|+\frac{t}{1-s}\right).
\end{align*}
\end{theorem}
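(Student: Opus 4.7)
The plan is to recycle the first half of the proof of Theorem \ref{OURPERTURBATION}, whose only input was $\max\{\beta,s\}<1$. That argument immediately yields
\[\|\theta_g\|\leq\frac{1+r}{1-s}\|\theta_f\|+\frac{t}{1-s},\qquad\|\theta_\omega\|\leq\frac{1+\alpha}{1-\beta}\|\theta_\tau\|+\frac{\gamma}{1-\beta},\]
so $S_{g,\omega}=\theta_\omega\theta_g$ is bounded with the claimed upper frame bound $\|\theta_\omega\|\|\theta_g\|$. What remains is to establish the invertibility of $S_{g,\omega}$. Here the hypothesis $\alpha+\gamma\|\theta_fS_{f,\tau}^{-1}\|<1$ is no longer available, so Theorem \ref{cc1} cannot be invoked directly, and a Neumann-series argument takes its place.

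The key identity I would work from is the telescoping rewrite of the difference $S_{f,\tau}-S_{g,\omega}$,
\[S_{f,\tau}x-S_{g,\omega}x=\sum_{n=1}^\infty(f_n-g_n)(x)\tau_n+\sum_{n=1}^\infty g_n(x)(\tau_n-\omega_n),\]
together with its companion obtained by grouping the $\omega_n$ with $f_n-g_n$ instead,
\[S_{f,\tau}x-S_{g,\omega}x=\sum_{n=1}^\infty(f_n-g_n)(x)\omega_n+\sum_{n=1}^\infty f_n(x)(\tau_n-\omega_n).\]
Applying $S_{f,\tau}^{-1}$ on the left produces two variants of $I-S_{f,\tau}^{-1}S_{g,\omega}$, while first replacing $x$ by $S_{f,\tau}^{-1}x$ produces two variants of $I-S_{g,\omega}S_{f,\tau}^{-1}$. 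Taking norms term-by-term converts each variant into one of the four summability expressions in (1)--(4); for example, the first grouping combined with left-multiplication by $S_{f,\tau}^{-1}$ gives
\[\|x-S_{f,\tau}^{-1}S_{g,\omega}x\|\leq\Bigl(\sum_{n=1}^\infty\|f_n-g_n\|\|S_{f,\tau}^{-1}\tau_n\|+\|g_n\|\|S_{f,\tau}^{-1}(\tau_n-\omega_n)\|\Bigr)\|x\|,\]
whose coefficient is strictly smaller than $1$ precisely under hypothesis (1); similarly (2) controls the second left-multiplied variant, and (3), (4) correspond to the two variants of $I-S_{g,\omega}S_{f,\tau}^{-1}$ where $\|\cdot\|$ is replaced by $\|(\cdot)S_{f,\tau}^{-1}\|$ on the functionals and $\|\cdot\|$ is kept on the vectors.

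In each case the corresponding operator, $I-S_{f,\tau}^{-1}S_{g,\omega}$ or $I-S_{g,\omega}S_{f,\tau}^{-1}$, has operator norm strictly less than one, so the Neumann series delivers invertibility of $S_{f,\tau}^{-1}S_{g,\omega}$ or $S_{g,\omega}S_{f,\tau}^{-1}$; multiplying by $S_{f,\tau}$ on the appropriate side then yields the invertibility of $S_{g,\omega}$ itself, and $(\{g_n\}_n,\{\omega_n\}_n)$ is a p-ASF. The only step demanding a modicum of care is the justification of the absolute convergence and unconditional rearrangement of the telescoping sums, which is supplied by the finiteness of the series in (1)--(4) together with the already-established boundedness of $\theta_f,\theta_g,\theta_\tau,\theta_\omega$. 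I do not foresee a genuine obstacle; the work is largely bookkeeping, matching each of the four hypotheses to its appropriate combination of telescoping split and side of application.
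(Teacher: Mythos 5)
Your proposal is correct and follows essentially the same route as the paper: the same recycling of the first part of the proof of Theorem \ref{OURPERTURBATION} to bound $\theta_g$ and $\theta_\omega$ (yielding the stated upper bound via $\|S_{g,\omega}\|\leq\|\theta_\omega\|\|\theta_g\|$), the same two telescoping splits of $S_{f,\tau}-S_{g,\omega}$, and the same matching of hypotheses (1)--(2) to $I-S_{f,\tau}^{-1}S_{g,\omega}$ and (3)--(4) to $I-S_{g,\omega}S_{f,\tau}^{-1}$, with invertibility obtained by the Neumann series (which the paper leaves implicit). No gaps.
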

\begin{proof}
Following the initial lines in the proof of Theorem \ref{OURPERTURBATION}, we see that $\theta_g$ and $\theta_\omega$ are well-defined bounded linear operators. 	We now consider four cases.\\
  Assume (1). Then 
	\begin{align*}
	\left\|x-\sum_{n=1}^{\infty}g_n(x)S_{f,\tau}^{-1}\omega_n\right\|&=\left\|\sum_{n=1}^{\infty}f_n(x)S_{f,\tau}^{-1}\tau_n-\sum_{n=1}^{\infty}g_n(x)S_{f,\tau}^{-1}\omega_n\right\|\leq \sum_{n=1}^{\infty}\|f_n(x)S_{f,\tau}^{-1}\tau_n-g_n(x)S_{f,\tau}^{-1}\omega_n\|\\
	&\leq \sum_{n=1}^{\infty}\bigg\{\|f_n(x)S_{f,\tau}^{-1}\tau_n-g_n(x)S_{f,\tau}^{-1}\tau_n\|+\|g_n(x)S_{f,\tau}^{-1}\tau_n-g_n(x)S_{f,\tau}^{-1}\omega_n\|\bigg\}\\
	&=\sum_{n=1}^{\infty}\bigg\{\|(f_n-g_n)(x)S_{f,\tau}^{-1}\tau_n\|+\|g_n(x)S_{f,\tau}^{-1}(\tau_n-\omega_n)\|\bigg\}\\
	&\leq \left(\sum_{n=1}^{\infty}\bigg\{\|f_n-g_n\|\|S_{f,\tau}^{-1}\tau_n\|+\|g_n\|\|S_{f,\tau}^{-1}(\tau_n-\omega_n)\|\bigg\}\right)\|x\|.
	\end{align*}
	Therefore the operator  $S_{f,\tau}^{-1}S_{g,\omega}$ is invertible.\\
	 Assume (2). Then 
	  
	\begin{align*}
	\left\|x-\sum_{n=1}^{\infty}g_n(x)S_{f,\tau}^{-1}\omega_n\right\|&=\left\|\sum_{n=1}^{\infty}f_n(x)S_{f,\tau}^{-1}\tau_n-\sum_{n=1}^{\infty}g_n(x)S_{f,\tau}^{-1}\omega_n\right\|\leq \sum_{n=1}^{\infty}\|f_n(x)S_{f,\tau}^{-1}\tau_n-g_n(x)S_{f,\tau}^{-1}\omega_n\|\\
	&\leq \sum_{n=1}^{\infty}\bigg\{\|f_n(x)S_{f,\tau}^{-1}\tau_n-f_n(x)S_{f,\tau}^{-1}\omega_n\|+\|f_n(x)S_{f,\tau}^{-1}\omega_n-g_n(x)S_{f,\tau}^{-1}\omega_n\|\bigg\}\\
	&=\sum_{n=1}^{\infty}\bigg\{\|f_n(x)S_{f,\tau}^{-1}(\tau_n-\omega_n)\|+\|(f_n-g_n)(x)S_{f,\tau}^{-1}\omega_n\|\bigg\}\\
	&\leq \left(\sum_{n=1}^{\infty}\bigg\{\|f_n\|\|S_{f,\tau}^{-1}(\tau_n-\omega_n)\|+\|f_n-g_n\|\|S_{f,\tau}^{-1}\omega_n\|\bigg\}\right)\|x\|.
	\end{align*}
	Therefore the operator  $S_{f,\tau}^{-1}S_{g,\omega}$ is invertible.\\
	   Assume (3). Then 
	\begin{align*}
	\left\|x-\sum_{n=1}^{\infty}g_n(S_{f,\tau}^{-1}x)\omega_n\right\|&=\left\|\sum_{n=1}^{\infty}f_n(S_{f,\tau}^{-1}x)\tau_n-\sum_{n=1}^{\infty}g_n(S_{f,\tau}^{-1}x)\omega_n\right\|\leq\sum_{n=1}^{\infty}\|f_n(S_{f,\tau}^{-1}x)\tau_n-g_n(S_{f,\tau}^{-1}x)\omega_n\| \\
	&\leq \sum_{n=1}^{\infty}\bigg\{\|f_n(S_{f,\tau}^{-1}x)\tau_n-g_n(S_{f,\tau}^{-1}x)\tau_n\|+\|g_n(S_{f,\tau}^{-1}x)\tau_n-g_n(S_{f,\tau}^{-1}x)\omega_n\|\bigg\}\\
	&=\sum_{n=1}^{\infty}\bigg\{\|(f_n-g_n)(S_{f,\tau}^{-1}x)\tau_n\|+\|g_n(S_{f,\tau}^{-1}x)(\tau_n-\omega_n)\|\bigg\}\\
	&\leq \left(\sum_{n=1}^{\infty}\bigg\{\|(f_n-g_n)S_{f,\tau}^{-1}\|\|\tau_n\|+\|g_nS_{f,\tau}^{-1}\|\|\tau_n-\omega_n\|\bigg\}\right)\|x\|.
	\end{align*}
	Therefore the operator  $S_{g,\omega}S_{f,\tau}^{-1}$ is invertible.\\
	 Assume (4). Then 
	 \begin{align*}
	\left\|x-\sum_{n=1}^{\infty}g_n(S_{f,\tau}^{-1}x)\omega_n\right\|&=\left\|\sum_{n=1}^{\infty}f_n(S_{f,\tau}^{-1}x)\tau_n-\sum_{n=1}^{\infty}g_n(S_{f,\tau}^{-1}x)\omega_n\right\|\leq\sum_{n=1}^{\infty}\|f_n(S_{f,\tau}^{-1}x)\tau_n-g_n(S_{f,\tau}^{-1}x)\omega_n\|\\
	&\leq \sum_{n=1}^{\infty}\bigg\{\|f_n(S_{f,\tau}^{-1}x)\tau_n-f_n(S_{f,\tau}^{-1}x)\omega_n\|+\|f_n(S_{f,\tau}^{-1}x)\omega_n-g_n(S_{f,\tau}^{-1}x)\omega_n\|\bigg\}\\
	&=\sum_{n=1}^{\infty}\bigg\{\|f_n(S_{f,\tau}^{-1}x)(\tau_n-\omega_n)\|+\|(f_n-g_n)(S_{f,\tau}^{-1}x)\omega_n\|\bigg\}\\
	&\leq \left(\sum_{n=1}^{\infty}\bigg\{\|f_nS_{f,\tau}^{-1}\|\|\tau_n-\omega_n\|+\|(f_n-g_n)S_{f,\tau}^{-1}\|\|\omega_n\|\bigg\}\right)\|x\|.
	\end{align*}
	Therefore the operator  $S_{g,\omega}S_{f,\tau}^{-1}$ is invertible.

Hence in each of assumptions we get that  $ (\{g_n \}_{n}, \{\omega_n \}_{n}) $ is a p-ASF for $\mathcal{X}$.
\end{proof}

  \section{Acknowledgements}
   First author thanks National Institute of Technology Karnataka (NITK) Surathkal for
  financial assistance.

 \bibliographystyle{plain}
 \bibliography{reference.bib}

\end{document}